\newtheorem{theorem}{Theorem}[section]
\newtheorem{lemma}[theorem]{Lemma}
\theoremstyle{definition}
\theoremstyle{remark}
\newtheorem{remark}[theorem]{Remark}
\numberwithin{equation}{section}
\begin{document}

\title{On the proximity of multiplicative functions with the function counting the number of prime factors with multiplicity}

\author{Theophilus Agama}
\address{Department of Mathematics, African Institute for Mathematical science, Ghana
}
\email{Theophilus@aims.edu.gh/emperordagama@yahoo.com}



\date{\today.}



\begin{abstract}
Given an additive function $f$ and a multiplicative function $g$, we set $E(f,g;x)=\# \{n\leq x:f(n)=g(n)\}$. We investigate the size of this quantity; in particular, we establish   lower bounds for $E(\Omega , g;x)$, where $\Omega(n)$ stands for the number of prime factors of $n$ counting their multiplicity and where $g$ is an arbitrary multiplicative function. We show that $\displaystyle{E(\Omega, g, x)\gg \frac{x}{(\log \log x)^{\frac{1}{2}+\epsilon}}}$, for any arbitrarily small $\epsilon >0$.  This is therefore an extension of an earlier result of Dekonick, Doyon and Letendre.
\end{abstract}

\maketitle

\section{Introduction}
Let us set $E(f,g;x):= \# \{n\leq x:f(n)=g(n)\}$, where $f$ and $g$ are arbitrary additive and multiplicative functions, respectively. One of the basic questions one can ever ask is, how large and how small can this quantity be. In 2014, Dekonick, Doyon and Letendre \cite{de2014proximity} proved that for some suitable choice of multiplicative function and some choice of sequence $(x_n)$ of positive integers \begin{align}E(\omega ,g;x_n)\gg \frac{x_n}{(\log \log x_n)^{\frac{1}{2}+\epsilon}},\nonumber
\end{align}for any small $\epsilon >0$. Above all they were able to show that if $f$ is an integer-valued additive function such that \begin{align}\varphi (x)=\varphi_f(x)=\frac{B(x)}{A(x)}\longrightarrow 0\nonumber
\end{align}as $x\longrightarrow \infty$, where \begin{align*}A(x):=\sum \limits_{p^{\alpha}\leq x}f(p^{\alpha})\bigg(1-\frac{1}{p}\bigg) \quad \text{~and~} \quad B(x):=\sum \limits_{p^{\alpha}\leq x}\frac{|f(p^{\alpha})|^2}{p^{\alpha}},
\end{align*}and that\begin{align}\max_{z\in \mathbb{R}} \# \{n\leq x :f(n)=z\}=O\bigg(\frac{x}{K(x)}\bigg),\nonumber
\end{align}where $K(x)\longrightarrow \infty$ as $x\longrightarrow \infty$. Then, for any multiplicative function $g$ \begin{align}E(f,g;x)=o(x) \nonumber
\end{align}as $x\longrightarrow \infty$. In particular, $E(\omega,g,x)=o(x)$ as $x\longrightarrow \infty$.

\footnote{On the proximity of multiplicative functions to the function counting the number of prime divisors with multiplicity
\par
}%
.

\section{Prelimary results}

\begin{lemma}\label{llady}
Let $\pi_k(x)=\# \{n\leq x:\omega (n)=k\}$ for each positive integer $k$. Then the maximum value of $\pi_k (x)$ is $\frac{x}{\sqrt{\log \log x}}(1+o(1))$ and the value of $k$ for which it occurs is $k=\log \log x+O(1)$
\end{lemma}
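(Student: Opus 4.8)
The plan is to combine the classical asymptotic for $\pi_k(x)$ with Stirling's formula to locate and evaluate the peak, and to handle the remaining values of $k$ with the Hardy--Ramanujan inequality. Write $y=\log\log x$ throughout. The first step is to invoke Landau's theorem in the uniform form of Sathe and Selberg: for any fixed $\delta>0$ and all $1\le k\le(2-\delta)y$,
\[
\pi_k(x)=\frac{x}{\log x}\,\frac{y^{k-1}}{(k-1)!}\left(G\!\left(\frac{k-1}{y}\right)+O_{\delta}\!\left(\frac{k}{y^{2}}\right)\right),
\]
where $G$ is analytic near $1$ and $G(1)=1$. Consequently, for $k=y+O(y^{2/3})$ we have $(k-1)/y\to1$ and $k/y^{2}\to0$, so uniformly in this window
\[
\pi_k(x)=(1+o(1))\,\frac{x}{\log x}\,\frac{y^{k-1}}{(k-1)!},
\]
and the problem reduces, on the central window, to maximising the weight $w(m):=y^{m}/m!$ over integers $m=k-1$.

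Next I would maximise this weight. Since $w(m+1)/w(m)=y/(m+1)$ exceeds $1$ precisely for $m<y-1$, the sequence $w(m)$ is unimodal with maximum at $m=\lfloor y\rfloor$; hence $\pi_k(x)$ peaks at $k=\lfloor\log\log x\rfloor+1=\log\log x+O(1)$, which is the asserted location. For $m=y+O(1)$, Stirling's formula gives
\[
\frac{y^{m}}{m!}=(1+o(1))\,\frac{e^{y}}{\sqrt{2\pi y}},
\]
the Gaussian factor $\exp\!\bigl(-(m-y)^{2}/2y\bigr)$ being $1+o(1)$ for bounded $m-y$. Since $e^{y}=\log x$, the two displays above yield, for this $k$,
\[
\pi_k(x)=(1+o(1))\,\frac{x}{\sqrt{2\pi\log\log x}},
\]
which is the claimed maximum of order $x/\sqrt{\log\log x}$ (with the precise constant $1/\sqrt{2\pi}$ produced by Stirling).

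Then I would rule out the tails, i.e. check that no $k$ with $\abs{k-\log\log x}>y^{2/3}$ gives a larger value. For this the Hardy--Ramanujan inequality suffices: there are absolute constants $A_{1},A_{2}>0$ with
\[
\pi_k(x)\le\frac{A_{1}x}{\log x}\,\frac{(\log\log x+A_{2})^{k-1}}{(k-1)!}\qquad(k\ge1,\ x\ge3).
\]
The right-hand side is again unimodal in $k$, with peak near $k-1=y+A_{2}$ and peak value $O(x/\sqrt{y})$ by the same Stirling estimate; telescoping the consecutive ratios shows that at distance at least $y^{2/3}$ from that peak the right-hand side is at most $\exp(-c\,y^{1/3})$ times its peak value, for an absolute $c>0$. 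Hence $\pi_k(x)=o(x/\sqrt{\log\log x})$ throughout the tails, uniformly, and the global maximum is the one found above.

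The main obstacle is the uniform asymptotic of the first step in the range $k=\log\log x+O(y^{2/3})$ with the constant $G(1)=1$ made explicit; this is exactly where the Selberg--Delange (Sathe--Selberg) method is needed, and where the constant in front of $x/\sqrt{\log\log x}$ is pinned down. If only the order of magnitude and the location of the maximum are wanted --- which is all that the present paper uses --- this step can be avoided: the Hardy--Ramanujan bound already gives $\max_k\pi_k(x)\ll x/\sqrt{\log\log x}$, while a second-moment (Tur\'an--Kubilius) estimate forces $\sum_{\abs{k-y}\le y^{1/2+\epsilon}}\pi_k(x)=x(1+o(1))$ over only $O(y^{1/2+\epsilon})$ values of $k$, giving the matching lower bound $\max_k\pi_k(x)\gg x/(\log\log x)^{1/2+\epsilon}$ at some $k=\log\log x+O(y^{1/2+\epsilon})$.
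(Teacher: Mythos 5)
The paper's own ``proof'' of this lemma is a one-line citation to Balazard's unimodality theorem, which already contains both the size and the location of the maximum; your proposal instead reconstructs the statement from scratch via the Sathe--Selberg (Landau) uniform asymptotic on the central window, Stirling's formula at the peak, and the Hardy--Ramanujan inequality to kill the tails. That route is sound and in fact sharper than what is stated: your computation yields $\max_k\pi_k(x)=(1+o(1))\,x/\sqrt{2\pi\log\log x}$, which is the correct asymptotic, and incidentally shows that the lemma's literal normalisation $\frac{x}{\sqrt{\log\log x}}(1+o(1))$ is off by the factor $\sqrt{2\pi}$ --- harmless for this paper, which only ever uses the order of magnitude (indeed only the existence of a maximising $k$). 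What the citation buys is brevity plus the full unimodality statement; what your argument buys is self-containedness and an explicit constant.

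One step in your write-up needs more care. From the unimodality of the weight $y^{m}/m!$ you conclude directly that $\pi_k(x)$ itself peaks at $k=\lfloor y\rfloor+1$, but the Sathe--Selberg factor is only $1+o(1)$ uniformly on the window, and maximising the main term does not by itself pin the argmax to within $O(1)$ of $y$: a multiplicative perturbation $1+\epsilon_x$ with $\epsilon_x\to0$ can shift the maximiser of a Gaussian-type weight by as much as $\sqrt{y\,\epsilon_x}$, which need not be bounded. The claim $k=\log\log x+O(1)$ does follow from your ingredients, but only after comparing $\pi_k$ at distance $d$ from the peak with the peak value: the Gaussian loss $\exp(-d^{2}/(2y))$ must be weighed against the perturbation $1+O(d/y)+O(1/y)$ coming from the analyticity of $G$ at $1$ and the explicit error term $O(k/y^{2})$, and this comparison forces $d=O(1)$. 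Either insert that comparison, or fall back (as the paper does) on Balazard's theorem for the location of the maximum. The tail estimate via Hardy--Ramanujan, and the closing remark that a Tur\'an--Kubilius second-moment bound already suffices for everything this paper actually uses, are fine as written.
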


\begin{proof}
This follows from a result of Balazard \cite{balazard1990unimodalite}.
\end{proof}

\begin{lemma}\label{normal}
For all $x\geq 2$ and for every $\delta >0$ \begin{align*}\# \{n\leq x: |\omega (n)-\log \log n|>(\log \log x)^{1+\delta}\}=o(x) \quad (x\longrightarrow \infty).
\end{align*}
\end{lemma}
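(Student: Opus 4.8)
The plan is to reduce the statement to the classical second‑moment estimate of Hardy--Ramanujan and Tur\'an. First I would dispose of the distinction between $\log\log n$ and $\log\log x$. Since $\#\{n\le x^{1/2}\}=O(x^{1/2})=o(x)$, it suffices to estimate the count over $n\in(x^{1/2},x]$, and for such $n$ one has $\log n\in(\tfrac12\log x,\log x]$, hence $0\le \log\log x-\log\log n<\log 2=:c_0$. Consequently, once $x$ is large enough that $(\log\log x)^{1+\delta}>2c_0$, the inequality $|\omega(n)-\log\log n|>(\log\log x)^{1+\delta}$ forces $|\omega(n)-\log\log x|>\tfrac12(\log\log x)^{1+\delta}$, so it is enough to bound $\#\{n\le x:\ |\omega(n)-\log\log x|>\tfrac12(\log\log x)^{1+\delta}\}$.

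Next I would invoke the Tur\'an--Hardy--Ramanujan inequality
\[
\sum_{n\le x}\bigl(\omega(n)-\log\log x\bigr)^2=O\bigl(x\log\log x\bigr),
\]
which follows from Mertens' theorem together with the elementary estimates $\sum_{n\le x}\omega(n)=x\log\log x+O(x)$ and $\sum_{n\le x}\omega(n)^2=x(\log\log x)^2+O(x\log\log x)$, obtained by writing $\omega(n)=\sum_{p\mid n}1$ and $\omega(n)^2=\sum_{p,q\mid n}1$ and interchanging the order of summation. (Alternatively one may quote the Tur\'an--Kubilius inequality directly for the strongly additive function $\omega$, which yields this bound immediately.)

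Then a direct application of Chebyshev's inequality gives
\[
\#\{n\le x:\ |\omega(n)-\log\log x|>\tfrac12(\log\log x)^{1+\delta}\}
\ \le\ \frac{4\sum_{n\le x}(\omega(n)-\log\log x)^2}{(\log\log x)^{2+2\delta}}
\ =\ O\!\left(\frac{x}{(\log\log x)^{1+2\delta}}\right),
\]
which is $o(x)$ as $x\to\infty$ for every fixed $\delta>0$. Combining this with the truncation of the first paragraph proves the lemma.

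I do not expect a genuine obstacle in this argument: the second‑moment bound is completely classical, and Chebyshev's inequality does the rest. The only point requiring mild care is the passage from $\log\log n$ to $\log\log x$ via the truncation at $x^{1/2}$ and the attendant loss of a constant factor in front of $(\log\log x)^{1+\delta}$, which is harmless since the final bound has the power $1+2\delta>1$ to spare.
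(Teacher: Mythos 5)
Your proof is correct. The paper itself gives no argument here: it simply cites Theorem 8.12 of Nathanson's book, which is the Hardy--Ramanujan/Tur\'an normal-order theorem for $\omega(n)$, and your write-up is precisely the standard second-moment proof of that theorem (Tur\'an's inequality $\sum_{n\le x}(\omega(n)-\log\log x)^2=O(x\log\log x)$ plus Chebyshev), so in substance you are taking the same route, just self-contained. The one point you add that the paper glosses over is the truncation at $x^{1/2}$ to pass from $\log\log n$ to $\log\log x$ at the cost of a bounded shift; this is a genuine (if mild) necessity, since the lemma as stated mixes the two quantities, and your handling of it is fine because the threshold $(\log\log x)^{1+\delta}$ leaves ample room.
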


\begin{proof}
This follows from Theorem 8.12 in the book of Nathason \cite{nathanson2000elementary}.
\end{proof}

\begin{remark}
Now we present one of the results and the techniques of Dekoninck, Doyon and Letendre \cite{de2014proximity} employed in obtaining the lower bound for the quantity $E(\omega, g, x)$.
\end{remark}

\begin{theorem}
Let $\epsilon>0$ be very small. Then, there exist a multiplicative function $g$ and a sequence $(r_j)$ of positive integers such that \begin{align*}E(\omega ,g;r_j)\gg \frac{x}{(\log \log r_j)^{\frac{1}{2}+\epsilon}}. 
\end{align*}
\end{theorem}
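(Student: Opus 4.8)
The plan is to manufacture a single multiplicative function $g$ which, along a suitably chosen sequence of scales $r_j$, agrees with $\omega$ on $\gg r_j/(\log\log r_j)^{1/2+\epsilon}$ of the integers $n\le r_j$. These $n$ will be certain multiples of a well-chosen prime $q_j$, and the lower bound for their number will come straight from the extremal estimate in Lemma~\ref{llady}. The essential difficulty is that $g$ is a single fixed function, whereas the ``peak'' of the distribution of $\omega(n)$ over $n\le x$ sits at $k\approx\log\log x$, which drifts to infinity with $x$; I will get around this by reserving a very sparse sequence of auxiliary primes $q_1<q_2<\cdots$ and letting the individual values $g(q_j)$ grow so as to track $\log\log r_j$, at the price of restricting attention to multiples of $q_j$.

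Concretely, I would first fix distinct primes $q_1<q_2<\cdots$ tending to infinity so fast that $S:=\sum_{i\ge1}1/q_i$ lies below a prescribed small absolute constant. Let $k_0(z)$ denote a value of $k$ at which $\pi_k(z)$ is maximal; by Lemma~\ref{llady}, $k_0(z)=\log\log z+O(1)$, $\pi_{k_0(z)}(z)=\bigl(1+o(1)\bigr)z/\sqrt{\log\log z}$, and in particular $k_0(z)\to\infty$. For each $j$ I then pick an integer $z_j>z_{j-1}$ large enough that $k_0(z_j)\ge q_j^{\lceil 2/\epsilon\rceil}$, and set
\[
c_j:=k_0(z_j)+1,\qquad r_j:=q_j z_j .
\]
Finally I take $g$ to be the multiplicative function determined on prime powers by $g(q_i)=c_i$ for all $i\ge1$, by $g(q_i^a)=1$ for all $i\ge1$ and $a\ge2$, and by $g(p^a)=1$ for every prime $p\notin\{q_i:i\ge1\}$ and every $a\ge1$.

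For the lower bound I would note that whenever $n=q_j m$ with $m\le z_j$, with $\omega(m)=c_j-1$, and with $m$ coprime to every $q_i$, one has $\omega(n)=1+\omega(m)=c_j$ while $g(n)=g(q_j)\,g(m)=c_j$, the last equality because $g(m)=1$ (every prime power exactly dividing $m$ is a $p^a$ with $p\notin\{q_i\}$, hence contributes a factor $1$). So each such $n$ is counted by $E(\omega,g;r_j)$, and by a union bound
\[
E(\omega,g;r_j)\ \ge\ \pi_{c_j-1}(z_j)\ -\ \sum_{i\ge1}\#\{\,m\le z_j:\ q_i\mid m,\ \omega(m)=c_j-1\,\}.
\]
Here the $i$-th term is at most $\sum_{b\ge1}\pi_{c_j-2}\!\bigl(z_j/q_i^{\,b}\bigr)$; bounding every $\pi_\ell$ occurring here by the maximal order $\ll y/\sqrt{\log\log y}$ furnished by Lemma~\ref{llady}, and discarding the negligible contribution of the arguments $z_j/q_i^{\,b}$ that have fallen below $\sqrt{z_j}$, the whole subtracted sum is $\ll S\,z_j/\sqrt{\log\log z_j}$. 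Since $c_j-1=k_0(z_j)$, Lemma~\ref{llady} also yields $\pi_{c_j-1}(z_j)=\bigl(1+o(1)\bigr)z_j/\sqrt{\log\log z_j}$, so if $S$ was taken small enough at the outset we conclude that $E(\omega,g;r_j)\gg z_j/\sqrt{\log\log z_j}$.

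It remains to convert this into a bound in terms of $r_j=q_j z_j$. As $q_j$ is a fixed number and $z_j\to\infty$, we have $\log\log z_j=\log\log r_j+o(1)$; moreover $\log\log r_j=k_0(z_j)+O(1)\ge q_j^{\lceil 2/\epsilon\rceil}+O(1)$, whence $q_j\le(\log\log r_j)^{\epsilon}$ for all large $j$. Therefore
\[
E(\omega,g;r_j)\ \gg\ \frac{z_j}{\sqrt{\log\log z_j}}\ \gg\ \frac{r_j}{q_j\sqrt{\log\log r_j}}\ \ge\ \frac{r_j}{(\log\log r_j)^{1/2+\epsilon}},
\]
which is the required inequality. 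The step I expect to be the true obstacle is precisely this tug-of-war in the choice of the primes $q_j$: they must tend to infinity so that $g$ can carry the unboundedly many distinct values $c_j=g(q_j)$ needed to follow the moving peak $\log\log r_j$, and they must be sparse enough that their multiples do not consume all of $\pi_{c_j-1}(z_j)$; yet imposing $q_j\mid n$ costs a factor $1/q_j$, which can be absorbed only so long as $q_j\le(\log\log r_j)^{\epsilon}$---and this is exactly why the exponent cannot be brought below $1/2+\epsilon$ by the present argument.
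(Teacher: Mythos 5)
Your construction is correct in outline, but it is a genuinely different route from the paper's. The paper fixes the scales in advance ($r_j=e^{e^{2^j}}$) and reserved primes $s_j$ of polynomial size $j^{1+\delta}$, so that $s_j\le(\log\log r_j)^{\epsilon}$ holds automatically; it defines $z_j$ as the value maximizing the count of $m\le r_j/s_j$ free of primes of $\mathcal{S}$ with $\omega(m)=z_j-1$, and lower-bounds that maximum by a pigeonhole over the window $I_j$ of length $2(\log\log r_j)^{\frac{1}{2}+\epsilon}$, using the concentration of $\omega$ (Lemma \ref{normal}) together with the positive density estimate \eqref{y1} for integers coprime to $\mathcal{S}$; the exponent $\frac{1}{2}+\epsilon$ comes from the window length. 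You instead read the peak of $\pi_k$ directly from Lemma \ref{llady}, set $g(q_j)=k_0(z_j)+1$, choose the scale $r_j=q_jz_j$ adaptively so that $q_j\le(\log\log r_j)^{\epsilon}$, and remove multiples of the reserved primes by a union bound. This buys you something real: you never need the concentration lemma nor the coprime-density estimate, and you obtain the full $z_j/\sqrt{\log\log z_j}$ before paying only the factor $1/q_j$; the paper's version buys explicit scales and a strongly multiplicative $g$ (yours is merely multiplicative, which is all the statement requires). The engine is the same in both proofs: a sparse reservoir of primes carries the drifting values of $g$, one counts $n=q_jm$ with $m$ coprime to the reservoir, and the reserved prime must stay below $(\log\log r_j)^{\epsilon}$.

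The one step that needs repair is the discarded range in your union bound. For $q_i^{\,b}\le\sqrt{z_j}$ your estimate $\ll S\,z_j/\sqrt{\log\log z_j}$ is fine, but the terms you discard with $b=1$ and $q_i>\sqrt{z_j}$ contribute up to $z_j\sum_{q_i>\sqrt{z_j}}1/q_i$, and the smallness of $S=\sum_i 1/q_i$ says nothing about how fast this tail decays: one can make $S$ as small as one likes while $\sum_{q_i>y}1/q_i$ decays more slowly than $1/\sqrt{\log\log y}$ for every $y$ (for instance a tail of order $1/\log\log\log y$), and then the discarded terms dominate the main term $z_j/\sqrt{\log\log z_j}$; enlarging $z_j$ does not rescue this, since both quantities shrink together. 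The cure is cheap and entirely inside your framework: choose the reserved primes genuinely sparse, say $q_{i+1}>\exp(\exp(q_i))$, so that $\sum_{q_i>y}1/q_i\ll 1/y$; then the discarded $b=1$ terms are $O(\sqrt{z_j})$ and the $b\ge 2$ terms are $\ll z_j\sum_{p^{b}>\sqrt{z_j},\,b\ge2}p^{-b}\ll z_j^{3/4}$, both negligible. (Alternatively one could keep the $q_i$ arbitrary and replace the trivial bound $z_j/q_i^{\,b}$ by a Hardy--Ramanujan type bound for integers below $z_j/q_i$ having about $\log\log z_j$ prime factors, but sparsity is simpler.) With that modification, and noting that the implied constant in your bound for the retained range comes from Lemma \ref{llady} and is absolute, so that ``take $S$ small at the outset'' is legitimate, your argument goes through and gives the claimed bound along $(r_j)$.
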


\begin{proof}
Given $\epsilon >0$ very small,  let $\mathcal{S}=\{s_1, s_2, \ldots\}$ be an infinite set of primes, with  $s_1=2$ and  $s_j$ to be the smallest prime number larger than \begin{align}\max \{s_{j-1},j^{1+\delta}\}\nonumber,
\end{align}for $j\geq 2$ and $\delta >0$ very small. Choose $r_j=e^{e^{2^{j}}}$ and let $(z_j)$ be sequence of integers maximizing the quantity\begin{align}\# \left \{m \leq \frac{r_j}{s_j}: s_k\not| m \text{~for~each~}s_k \in \mathcal{S}, \omega(m)=z_j-1\right \},
\end{align}for each $j\geq 1$, which is well defined by Lemma \ref{llady}. Define  $g$ a strongly multiplicative function on the primes as \begin{align*}g(p)=\begin{cases}z_j & \text{~if~}\quad p=s_j\in \mathcal{S}\\ 1 & \text{~if~}\quad p\notin \mathcal{S}.\end{cases}
\end{align*}To find a lower bound for $E(\omega, g;r_j)$, it suffices to consider integers of the form $n=m\cdot s_j$ such that $s_j\not| m$ for $s_j\in \mathcal{S}.$
\begin{align*}E(\omega, g;r_j)&=\# \{n\leq r_j:\omega (n)=g(n)\}\\&\geq \# \{n\leq r_j:s_j|n,\quad s_k\not| m \quad \text{for~}k\neq j,\quad \omega(m)=z_j\}\\& \geq \# \{m\leq \frac{r_j}{s_j}:s_k\not| m \text{~for~} s_k\in \mathcal{S}, ~\omega (m)=z_j-1\}.
\end{align*}Now, let $I_{j}=[\log \log r_j-(\log \log r_j)^{\frac{1}{2}+\epsilon}, \log \log r_j+(\log \log r_j)^{\frac{1}{2}+\epsilon}]$. Then\begin{align*}\# \{m\leq \frac{r_j}{s_j}:s_k\not| m \text{~for~}s_k\in \mathcal{S}\}=\# \{m\leq \frac{r_j}{s_j}:s_k\not| m \text{~for~}s_k\in S, \omega (m)\notin I_j\}\\+\# \{m\leq \frac{r_j}{s_j}:s_k\not| m,  \omega(m)\in I_j\}.
\end{align*}In relation to Lemma \ref{normal} \begin{align}\label{j}\# \{m\leq \frac{r_j}{s_j}:s_k\not| m \text{~for~}s_k\in S, ~\omega (m)\notin I_j\}=o\bigg(\frac{r_j}{s_j}\bigg) ~~(j\longrightarrow \infty).
\end{align}Also \begin{align}\label{y}\# \{m\leq \frac{r_j}{s_j}:s_k\not| m, ~\omega(m)\in I_j\}&\leq \sum \limits_{N\in I_{j}}\# \{m\leq \frac{r_j}{s_j}:s_k\not| m \text{~for~}s_k\in \mathcal{S},~\omega(m)=N\}\nonumber\\& \leq 2(\log \log r_j)^{\frac{1}{2}+\epsilon}\# \{m\leq \frac{r_j}{s_j}:\text{~for~}s_k \in S,~\omega(m)=z_j-1\}. 
\end{align}It follows from \eqref{y} and \eqref{j} that \begin{align}\label{T}\# \{m\leq \frac{r_j}{s_j}:s_k\not| m \text{~for~}s_{k}\in S, \omega(m)=z_j-1\}\geq \frac{\# \{m\leq \frac{r_j}{s_j}:s_k\not| m \text{~for~}s_k\in S\}}{2(\log \log r_j )^{\frac{1}{2}+\epsilon}}.
\end{align}It is also clear that\begin{align}\label{y1}\# \{m\leq \frac{r_j}{s_j}:s_k\not| m \text{~for~}s_k\in S\}=\bigg(1+o(1)\bigg)\frac{r_j}{s_j}C(\delta) ~~  (j\longrightarrow \infty).
\end{align}Plugging \eqref{y1} into \eqref{T}, we have that \begin{align}\# \{m\leq \frac{r_j}{s_j}:s_k\not| m \text{~for~}s_{k}\in S, \omega(m)=z_j-1\}\gg \bigg(\frac{1}{2}+o(1)\bigg)\frac{r_j}{s_j}C(\delta)\cdot \frac{1}{(\log \log r_j)^{\frac{1}{2}+\epsilon}}.
\end{align}Then, for sufficiently large $j$ we see that , $s_j<j^{1+2\delta}\leq (2^{j})^{\epsilon}$, where $(2^{j})^{\epsilon} =(\log \log r_j)^{\epsilon}$. Using this fact,  we obtain\begin{align}\label{L}\bigg(\frac{1}{2}+o(1)\bigg)\frac{r_j}{s_j}C(\delta)\cdot \frac{1}{(\log \log r_j)^{\frac{1}{2}+\epsilon}} \gg \bigg(\frac{1}{2}+o(1)\bigg)\frac{r_j}{(\log \log r_j)^{\frac{1}{2}+2\epsilon}},
\end{align}and from \eqref{L} we have \begin{align*}E(\omega, g;r_j)\gg \frac{r_j}{(\log \log r_j)^{\frac{1}{2}+\epsilon}},
\end{align*}thus completing the proof.
\end{proof}

\begin{remark}
It has to be said that this is a good lower bound, but it only works for a particular type of sequence and therefore is not uniform.
\end{remark}

\section{Main result}
In this section we use the techniques employed by Dekonick, Doyon and Letendre \cite{de2014proximity} to obtain a uniform lower bound for $E(\Omega, g,x)$.

\begin{theorem}
Let $\mathcal{S}=\{s_1,s_2,\ldots\}$ be an infinite set of the primes such that \begin{align*}\sum \limits_{j=1}^{\infty}\frac{1}{s_j}<\infty.
\end{align*}Then for any small $\epsilon >0$, there exists a strongly multiplicative function $g$ such that  \begin{align*}E(\Omega, g,x)\gg \frac{x}{(\log \log x)^{\frac{1}{2}+\epsilon}}.
\end{align*}
\end{theorem}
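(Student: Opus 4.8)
The plan is to mimic the argument of the previous theorem, but replacing $\omega$ by $\Omega$ and, crucially, choosing the sequence of test points more flexibly so that the bound becomes uniform in $x$ rather than holding only along $r_j = e^{e^{2^j}}$. First I would fix the summable set of primes $\mathcal{S}=\{s_1,s_2,\dots\}$ and, for a given large $x$, let $j=j(x)$ be the largest index with $s_j \le x^{1/2}$, say; one then works with integers of the form $n = m s_j$ with $m \le x/s_j$ and $s_k \nmid m$ for all $s_k \in \mathcal{S}$. Since $\Omega(m s_j) = \Omega(m)+1$ when $s_j \nmid m$, defining $g$ to be strongly multiplicative with $g(s_j)=z_j$ (where $z_j$ maximizes the count of such $m$ with $\Omega(m)=z_j-1$) and $g(p)=1$ for $p \notin \mathcal{S}$ forces $\Omega(n)=g(n)$ for all these $n$. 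The catch here is that a single multiplicative function $g$ must work for all $x$ simultaneously, so the values $g(s_j)=z_j$ must be assigned once and for all; I would therefore define $z_j$ using a fixed scale attached to $s_j$ itself (for instance the maximizing $k$ for the truncated count up to roughly $s_j^{2}$ or any fixed superpolynomial function of $s_j$), and then check that for the actual $x$ at hand this choice is still within the relevant window $I_j$ around $\log\log x$.

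The analytic core is then the same dichotomy as before. Writing $I = [\log\log x - (\log\log x)^{1/2+\epsilon},\ \log\log x + (\log\log x)^{1/2+\epsilon}]$, I would split the $\mathcal{S}$-free integers $m \le x/s_j$ according to whether $\Omega(m) \in I$ or not. The Erd\H{o}s–Kac / Tur\'an-type normal order statement for $\Omega$ — the analogue of Lemma \ref{normal}, which holds for $\Omega$ with the same proof since $\Omega(n)-\omega(n)$ is bounded on all but $o(x)$ integers — shows that the $m$ with $\Omega(m)\notin I$ are $o(x/s_j)$. Among those with $\Omega(m)\in I$, the interval $I$ contains at most $2(\log\log x)^{1/2+\epsilon}$ integer values, so by the pigeonhole maximality defining $z_j$ (and using the $\Omega$-analogue of Lemma \ref{llady}, again valid by Balazard's unimodality result for $\Omega$), the count with $\Omega(m)=z_j-1$ is at least a $\big(2(\log\log x)^{1/2+\epsilon}\big)^{-1}$ fraction of the total. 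Since the number of $\mathcal{S}$-free $m \le x/s_j$ is $(1+o(1))\,C \cdot x/s_j$ with $C=\prod_{k}(1-1/s_k)>0$ by summability of $\sum 1/s_k$, this yields $E(\Omega,g;x)\gg x/\big(s_j (\log\log x)^{1/2+\epsilon}\big)$.

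The last step is to absorb the $s_j$ in the denominator. Here I would exploit that $\mathcal{S}$ is summable, hence sparse: one can arrange (or simply assume, as the hypothesis permits an arbitrary summable $\mathcal{S}$) that $s_j$ grows slowly enough in terms of $j$ — for example $s_j \le 2^j$ — so that for the chosen $j(x)$ one has $s_j \le (\log\log x)^{\epsilon}$, exactly as in the previous proof where $s_j < j^{1+2\delta} \le (2^j)^\epsilon = (\log\log x)^\epsilon$. Then $x/\big(s_j(\log\log x)^{1/2+\epsilon}\big) \gg x/(\log\log x)^{1/2+2\epsilon}$, and renaming $2\epsilon$ as $\epsilon$ gives the claim for all sufficiently large $x$ (and trivially for bounded $x$ by adjusting the implied constant).

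The main obstacle, and the place where real care is needed, is the uniformity: in the earlier theorem the sparse sequence $(r_j)$ was chosen \emph{after} fixing $g$, so that $s_{j(r_j)}$ was automatically tiny relative to $\log\log r_j$; here $g$ must be fixed first and then serve every $x$. The resolution is that $j(x)$ is forced to be large (of size about $\log\log x$, since $s_j$ is at most polynomial in $j$ and we need $s_j \le x^{1/2}$), so $\log\log x \gg j$ and the required inequality $s_{j(x)} \le (\log\log x)^{\epsilon}$ holds once $s_j$ is, say, at most $2^j$; one must also verify that the fixed definition of $z_j$ (made in terms of $s_j$, not $x$) still lands inside the window $I$ around $\log\log x$ for the relevant range of $x$, which follows because both $\log\log x$ and the scale defining $z_j$ are $\log j + O(1)$-comparable. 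I would present these bookkeeping inequalities carefully but leave the routine estimates — the normal-order count and the $\prod(1-1/s_k)$ asymptotic — to citations of Lemmas \ref{llady} and \ref{normal} and their evident $\Omega$-analogues.
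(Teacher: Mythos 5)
Your plan has a genuine gap, and it sits exactly at the point you flag as ``the place where real care is needed.'' Because you isolate a \emph{single} index $j=j(x)$, your lower bound carries the factor $1/s_{j(x)}$, and you must simultaneously ensure (a) $s_{j(x)}\leq (\log\log x)^{\epsilon}$ and (b) that the pre-assigned value $z_j$ (defined from a scale attached to $s_j$ alone, so that $g$ does not depend on $x$) lies in the window $I$ around $\log\log x$. These two demands pull in opposite directions and your bookkeeping does not reconcile them. Concretely: with $j(x)$ the largest index with $s_j\leq x^{1/2}$ and $s_j$ of polynomial or even exponential growth in $j$, the prime $s_{j(x)}$ is of size comparable to a power of $x$, not of $(\log\log x)^{\epsilon}$; the inequality you import from the earlier theorem worked there only because $\log\log r_j=2^j$ was doubly exponential in the index while $s_j$ was polynomial in $j$, and the sequence $(r_j)$ was chosen \emph{after} $g$. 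If instead you shrink $j(x)$ so that $s_{j(x)}\leq(\log\log x)^{\epsilon}$, then any $z_j$ defined from a fixed scale tied to $s_j$ is of size about $\log\log(\mathrm{scale}(s_j))$, which is nowhere near $\log\log x$ for general $x$, so the pigeonhole step collapses (the count with $\Omega(m)=z_j-1$ is then $o(x/s_j)$ by the normal-order lemma, not a large fraction). Your claimed resolution, that ``both $\log\log x$ and the scale defining $z_j$ are $\log j+O(1)$-comparable,'' is false for a general summable $\mathcal{S}$ and for your own choice of $j(x)$. Moreover, you are not entitled to ``arrange'' or ``assume'' $s_j\leq 2^j$: in the statement $\mathcal{S}$ is given and only $g$ is existentially quantified, and summability of $\sum 1/s_j$ constrains $\mathcal{S}$ to be sparse rather than slowly growing, so it gives no upper bound on $s_j$ in terms of $j$. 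In short, the single-$j$ strategy reproduces precisely the non-uniformity obstruction of De Koninck--Doyon--Letendre rather than removing it.

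The paper's proof avoids this by never dividing by a single $s_j$ at all: it lower bounds $E(\Omega,g,x)$ by the \emph{sum} over all $s_j$ in a fixed residue class mod $4$ of the (pairwise disjoint) counts of $n\leq x$ divisible by $s_j$ and by no other element of $\mathcal{S}$, with $\Omega(n)=g(n)$. After the same window-plus-pigeonhole step, the harmful factors $1/s_j$ are then summed, and the hypothesis $\sum_j 1/s_j<\infty$ turns them into a positive constant $\mathrm{K}=\sum_{s_j\equiv 1\pmod 4}1/s_j$ multiplying $x(1+o(1))C(\mathcal{S})$, so no comparison between $s_j$ and $\log\log x$ is ever needed and the bound is uniform in $x$. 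That summation over all $j$ in a residue class (together with the mod $4$ splitting of $\mathcal{S}$ and the shifted values $g(s_j)=z_j\mp 1$) is the idea missing from your proposal; to repair your argument you would either need to adopt it or find some other device that eliminates the $1/s_{j(x)}$ loss while keeping $z_{j(x)}$ inside the window $I$ for every large $x$.
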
 

\begin{proof}
Let $(z_j)$ be sequence of positive integers maximizing the quantity\begin{align*}\# \{r\leq \frac{x}{s_j}:s_i\not| r \quad \text{~for~each~}s_i\in \mathcal{S},~~\Omega (r)=z_j-2\},
\end{align*}for $s_j\equiv 1\pmod 4$ for each $j\geq 1$,  and\begin{align*}\# \{r\leq \frac{x}{s_j}:s_i\not| r \quad \text{~for~each~}s_i\in \mathcal{S},~~\Omega (r)=z_j\},
\end{align*}for $s_j\equiv 3\pmod 4$ for each $j\geq 1$, which is well defined by Lemma \ref{llady}.
Define $g$, a strongly multiplicative function, on the primes as\begin{align*}g(p)=\begin{cases}z_j-1 \quad \text{~if~}p=s_j\equiv 1\pmod 4,~ s_j\in \mathcal{S}\\z_j+1 \quad \text{~if~}p=s_j\equiv 3 \pmod 4,~ s_j\in \mathcal{S}\\1 \quad \text{~if~} \quad p\notin \mathcal{S}.\end{cases}
\end{align*}To obtain a lower bound for $E(\Omega, g,x)$, it suffices to consider only integers of the form $n=r\cdot s_j^{\alpha}$ for $\alpha \geq 1$, $s_j\not| r$. Clearly \begin{align*}E(\Omega, g,x)&=\# \{n\leq x:\Omega (n)=g(n)\}\\&\geq \sum \limits_{\alpha \geq 1}\# \{n\leq x:~s_j^{\alpha}|n,\quad s_i\not| n \quad \text{~for~}\quad i\neq j,\quad \Omega(n)=g(n)\}\\&  \geq \sum \limits_{\alpha \geq 1}\# \{r\leq \frac{x}{s_j^{\alpha}}: \Omega (r)=g(s_j)-\alpha,~ s_i\not| r\text{~for~each~}s_i\in \mathcal{S}, \quad s_j\equiv 1 \pmod 4\}\\&\geq \# \{r\leq \frac{x}{s_j}:\Omega (r)=g(s_j)-1,~ s_i\not| r\text{~for~each~}s_i\in \mathcal{S},\quad s_j\equiv 1 \pmod 4\}\\& \geq \# \{r\leq \frac{x}{s_j}:\Omega (r)=z_j-2,~ s_i\not| r\text{~for~each~}s_i\in \mathcal{S}\}.
\end{align*}Again consider the interval\begin{align*}I=[\log \log x-(\log \log x)^{\frac{1}{2}+\epsilon}, \log \log x+(\log \log x)^{\frac{1}{2}+\epsilon}].
\end{align*}Let us  consider \begin{align*}\# \{r\leq \frac{x}{s_j}:s_i\not| r\text{~for~each~}s_i\in \mathcal{S},~s_j\equiv 1 \pmod 4\}.
\end{align*}We observe, in relation to Theorem \ref{normal},  $\# \{r\leq \frac{x}{s_j}:s_i\not| r\text{~for~each~}s_i\in \mathcal{S},~s_j\equiv 1 \pmod 4,~\Omega (r)\notin I_j\}=o\bigg(\frac{x}{s_j}\bigg)$, as $j\longrightarrow \infty$. On the other hand\begin{align*}\# \{r\leq \frac{x}{s_j}:s_i\not| r\quad \text{~for~each~}\quad s_i\in \mathcal{S},~s_j\equiv 1 \pmod 4,~\Omega (r)\in I_j\}\end{align*}\begin{align}= \sum \limits_{\mathcal{U}\in I_j}\# \{r\leq \frac{x}{s_j}:s_i\not| r\quad \text{~for~each~}\quad s_i\in \mathcal{S}, ~\Omega (r)=\mathcal{U}, \quad  s_j \equiv 1 \pmod 4\}\nonumber\end{align}\begin{align}\leq 2(\log \log x)^{\frac{1}{2}+\epsilon}\# \{r\leq \frac{x}{s_j}:s_i\not| r\text{~for~each~}s_i\in \mathcal{S},\quad  \Omega(r)=z_j-2\}.\label{my4}
\end{align}It follows from \eqref{my4}, that \begin{align*}\# \{r\leq \frac{x}{s_j}:s_i\not| r\text{~for~each~}s_i\in \mathcal{S},~\Omega (r)=z_j-2\}\end{align*}\begin{align*}&\geq \frac{1}{2(\log \log x)^{\frac{1}{2}+\epsilon}}\# \{r\leq \frac{x}{s_j}:s_i\not| r\text{~for~each~}s_i\in \mathcal{S},~s_j\equiv 1 \pmod 4\}\\&\geq \frac{1}{2(\log \log x)^{\frac{1}{2}+\epsilon}} \sum \limits_{s_j\equiv 1\pmod 4}\# \{r\leq \frac{x}{s_j}:s_i\not| r\text{~for~each~}s_i\in \mathcal{S}\}\\&\geq \frac{1}{2(\log \log x)^{\frac{1}{2}+\epsilon}} \sum \limits_{s_j\equiv 1\pmod 4}\frac{x}{s_j}(1+o(1))C(\mathcal{S})\\&\geq  \frac{1}{2(\log \log x)^{\frac{1}{2}+\epsilon}}x(1+o(1))C(\mathcal{S})\sum \limits_{{s_j\equiv 1\pmod 4}}\frac{1}{s_j}\\&\geq \frac{1}{2(\log \log x)^{\frac{1}{2}+\epsilon}}x(1+o(1))C(\mathcal{S})\mathrm{K},\label{r}
\end{align*}for some positive real number  $\mathrm{K}$  and \begin{align*}C(\mathcal{S})=\prod \limits_{j=1}^{\infty}\bigg(1-\frac{1}{s_j}\bigg) \quad \text{~and~}\sum \limits_{s_j\equiv 1\pmod 4}\frac{1}{s_j}<\infty.
\end{align*}Carrying out the same process for the other residue class $s_j\equiv 3\pmod 4$ and combining the result, we will obtain\begin{align*}E(\Omega ,g,x)\gg \frac{x}{(\log \log x)^{\frac{1}{2}+\epsilon}}.
\end{align*}
\end{proof}

\section{Conclusion}
The lower bound obtained in the original work of Dekoninck, Doyon and Letendre \cite{de2014proximity} can be made uniform by using a similar choice of multiplicative function in the main result; that is, if we let \begin{align*}g(p)=\begin{cases}z_j-1 \quad \text{~if~}p=s_j\equiv 1\pmod 4,~ s_j\in \mathcal{S}\\z_j+1 \quad \text{~if~}p=s_j\equiv 3 \pmod 4,~ s_j\in \mathcal{S}\\1 \quad \text{~if~} \quad p\notin \mathcal{S}.\end{cases}
\end{align*}Then \begin{align*}E(\omega, g,x)\gg \frac{x}{(\log \log x)^{\frac{1}{2}+\epsilon}}
\end{align*}holds uniformly.

\bibliographystyle{amsplain}

\end{document}